\documentclass[12pt,a4paper]{article}
\usepackage{amsmath,amssymb,amsthm}
\usepackage{hyperref}
\usepackage{enumitem}
\usepackage{mathtools}
\usepackage{adjustbox}
\setlength{\topmargin}{-0.1 in}
\setlength{\textwidth}{16cm} \setlength{\textheight}{21cm}
\setlength{\oddsidemargin}{-0.2cm}
\setlength{\evensidemargin}{-0.2cm}
\usepackage[numbers]{natbib}
\usepackage{url}
\usepackage{xcolor}

\newtheorem{theorem}{Theorem}[section]

\theoremstyle{definition}

\newtheorem{assum}{Assumption}[section]

\numberwithin{equation}{section}
\hypersetup{colorlinks=true, linkcolor=blue, citecolor=blue}

    \makeatletter
\def\namedlabel#1#2{\begingroup
	#2%
	\def\@currentlabel{#2}%
	\phantomsection\label{#1}\endgroup
}
\makeatother

\allowdisplaybreaks

\begin{document}

\title{Lie-Bracket Nash Equilibrium Seeking with Bounded Update Rates for Noncooperative Games}

\author{Victor Hugo Pereira Rodrigues$^{a}$, \ Tiago Roux Oliveira$^{a}$, \\ Miroslav Krsti{\'c}$^{b}$, \ Tamer Ba{\c s}ar$^{c}$ \\ \\
	{\small{ $^{a}$State University of Rio de Janeiro,}}\\ 
	{\small{Department of Electronics and Telecommunication Engineering, Rio de Janeiro -- RJ, Brazil.}}\\ 
	{\small{\textcolor{black}{E-mail: victor.rodrigues@uerj.br, tiagoroux@uerj.br}}}\ \\ \\
	{\small{\textcolor{black}{$^{b}$University of California San Diego,}}}\\ 
	{\small{\textcolor{black}{Department of Mechanical and Aerospace Engineering, La Jolla -- CA, USA.}}}\\
	{\small{\textcolor{black}{E-mail: krstic@ucsd.edu}}}\ \\ \\
	{\small{\textcolor{black}{$^{c}$University of Illinois Urbana-Champaign,}}}\\ 
	{\small{\textcolor{black}{Department of Electrical and Computer Engineering, Urbana -- IL, USA.}}}\\
	{\small{\textcolor{black}{E-mail: basar1@illinois.edu}}}
	}

\date{}

\maketitle

\begin{abstract}
This paper proposes a novel approach for local convergence to Nash equilibrium in quadratic noncooperative games based on a distributed Lie-bracket extremum seeking control scheme. This is the first instance of noncooperative games being tackled in a model-free fashion integrated with the extremum seeking method of bounded update rates. In particular, the stability analysis is carried out using Lie-bracket approximation and Lyapunov's direct method. We quantify the size of the ultimate small residual sets around the Nash equilibrium and illustrate the theoretical results numerically on an example in an oligopoly setting. \\ \\
\textbf{Keywords:}~Game theory, Noncooperative games, Nash equilibrium seeking, Lie-bracket approximation.
\end{abstract}

\section{Introduction}
\label{sec:introduction}
Game theory examines decision-making processes involving multiple individuals. The participants, often called players, agents, decision-makers, or simply persons, do not always have complete control over the outcome. Games are classified as \textit{noncooperative} when each participant pursues their own interests, which may partially conflict with the interests of others. Noncooperative game theory specifically addresses scenarios where players act independently without forming coalitions or collaborating with one another, unlike cooperative game theory. 
In such situations, the outcome is not determined by a single individual but requires a series of interdependent decisions made by multiple players. When these individuals assign different values to potential outcomes, the foundation for conflict is established \cite{Basar:1999}. 

A central concept in noncooperative games is the \textit{Nash equilibrium}, a solution where no player can improve their outcome by unilaterally changing their strategy. This equilibrium reflects a state of mutual best responses and is often referred to as a Nash solution. Unlike optimal control problems involving a single player, where optimality is well-defined and unambiguous, optimality in multi-person decision-making is inherently more complex and subjective. The Nash equilibrium provides one specific interpretation of optimality, as it balances competing interests in a stable and predictable manner \cite{N:1951}.

In this context, actions generally represent specific choices or controls, while strategies are decision rules informed by available information. A strategy is designed before uncertainties are resolved and dictates how a player will respond to future unknowns. Once the uncertainties are clarified, the resulting course of action is termed an action. In \cite{FKB:2012}, the authors study the problem of computing, in real time, the Nash equilibria of static noncooperative games with $N$ players by employing a non-model based approach. By utilizing \textit{extremum seeking} (ES) \cite{S:2024} with sinusoidal perturbations \cite{KW:2000}, the players achieve stable, local attainment of their Nash strategies without the need for any model information. Thereafter, the expansion of the algorithm proposed in \cite{FKB:2012} to different domains was given for delayed and PDE systems \cite{JOTA:2020,CSm:2024,SIAM:2022}, fixed-time convergence \cite{Poveda:2023,VSS:2024}, games with deception \cite{Tang:2024}, and event-triggered architectures \cite{CDC:2024}.

In this paper, we advance such designs and analyses based on classical ES \cite{KW:2000} to Nash equilibrium seeking (NES) with \textit{bounded update rates} \cite{SK_SCL:2014} for quadratic noncooperative games. We introduce a new NES scheme, in
which the uncertainty (payoff functions) is confined to the argument of a sine/cosine
function, thereby  guaranteeing known bounds on update rates and control efforts. In order to prove our stability results we employ Lie-bracket approximation/averaging \cite{DSEJ:2013} with highly oscillatory terms. \textcolor{black}{Standard Bogolyubov averaging from \cite{K:2002} does not suffice for the proposed bounded update law.}  Multi-agent ES setups which use similar game-theoretic approaches can be found in \cite{DSEJ:2013}. However, unlike the Lie-bracket NES approach proposed here, the authors in \cite{DSEJ:2013} proved local stability of multi-agent systems using ES feedback with \textit{unbounded} update rates.

We show that the trajectories of the proposed NES system can be approximated by the trajectories of a system which involves certain Lie brackets of the vector fields of the NES system. It turns out that the Lie-bracket system directly reveals the optimizing behavior of the corresponding NES system. Each player employing the proposed distributed algorithm is maximizing its own payoff, irrespective of what the other players' actions are. It is proven that if all the players are employing the Lie-bracket NES algorithms, they collectively converge to a small neighborhood of the Nash equilibrium. Hence, each of the players finds its optimal strategy, in an online fashion, even though they do not know the analytical forms of the payoff functions (neither the other players' nor their own) and neither have access to actions applied by the other players nor to payoffs achieved by the other players. Simulations illustrate the effectiveness of our approach on a numerical oligopoly market example. 

The paper is organized as follows. Section~\ref{sec2} introduces the standard assumptions (such as, diagonal dominance for the interaction matrix of the agents) for noncooperative games with $N$ players and quadratic payoff functions. While the proposed NES taking into account the design based on bounded update rates is shown in Section~\ref{sec3}, we consider the Lie-bracket approximation approach for NES in noncooperative games in Section~\ref{sec4}. The stability analysis with the proof of the main theorem is conducted in Section~\ref{sec5}. We illustrate the theoretical results numerically on an example in a 4-player setting, given in Section~\ref{sec6}. Concluding remarks are included in Section~\ref{sec7}.  
The theorem for Lie-bracket approximation \cite{DSEJ:2013} used in the paper is reviewed in the Appendix.

\section{Problem Formulation} \label{sec2}

Here, we consider the general class of static noncooperative games with players aiming  to maximize their quadratic payoff functions. 
General quadratic payoff functions are assumed. In particular, the payoff function for each player is expressed in the following form
\begin{align}
y_{i}=J_{i}(\theta(t)) = \frac{1}{2} \sum_{j=1}^N \sum_{k=1}^N H_{jk}^{i} \theta_j(t) \theta_k(t) + \sum_{j=1}^N h_{j}^{i} \theta_j(t) + c^{i}\,, \label{eq:Ji_v1}
\end{align}
where $\theta_j \in \mathbb{R}$ is the action of player $j$, $H_{ij}^{i}$, $h_{j}^{i}$, and $c^{i}$ are constants, $H_{ii}^{i}<0$, and $H_{kj}^{i} = H_{jk}^{i}$. Quadratic cost functions hold particular importance in game theory for two main reasons. First, they provide a useful second-order approximation to more complex nonlinear cost functions, enabling their application to a wide range of problems. Second, their mathematical simplicity allows for the derivation of closed-form equilibrium solutions, which facilitate deeper understanding of the behavior and properties of equilibrium concepts in various scenarios \cite{Basar:1999}.

In order to determine the Nash equilibrium solution in
strictly concave quadratic games, we differentiate (\ref{eq:Ji_v1}) with respect to $\theta \in \mathbb{R}^{N}$, set the resulting derivatives to zero, and solve the resulting system of equations. In other words, for a quadratic game with payoff functions as (\ref{eq:Ji_v1}), the $N$-player game admits a Nash equilibrium $\theta^{\ast} = [\theta_1^{\ast}\,, \dots\,, \theta_N^{\ast}]^{\top}$ if and only if
\begin{align}
\sum_{j=1}^N H_{ij}^{i} \theta_j^{\ast} + h_{i}^{i} = 0, \quad \forall i \in \{1, \dots, N\}\,. \label{eq:NEcond_v1}
\end{align}
In the matrix form, (\ref{eq:NEcond_v1}) is expressed as
\begin{align}
H\theta^{\ast}+h = 0\,, \label{eq:NEcond_v2}
\end{align}
where
\begin{align}
H = \begin{bmatrix}
H_{11}^{1} & H_{12}^{1} & \cdots & H_{1N}^{1} \\
H_{21}^{2} & H_{22}^{2} & \cdots & H_{2N}^{2} \\
\vdots & \vdots & \ddots & \vdots \\
H_{N1}^{N} & H_{N2}^{N} & \cdots & H_{NN}^{N}
\end{bmatrix}, ~h = \begin{bmatrix}
h_{1}^{1} \\
h_{2}^{2} \\
\vdots \\
h_{N}^{N}
\end{bmatrix},
 ~
\theta^{\ast} = \begin{bmatrix}
\theta_{1}^{\ast} \\
\theta_{2}^{\ast} \\
\vdots \\
\theta_{N}^{\ast}
\end{bmatrix}. \label{eq:NEcond_v3}
\end{align}

The quadratic $N$-player nonzero-sum static game, characterized by the payoff functions in equation (\ref{eq:Ji_v1}), possesses a unique Nash equilibrium if the matrix $H$, defined in equation (\ref{eq:NEcond_v3}), is invertible.

This brings us to the following assumptions.

\begin{assum} \label{assum:Nash}
 The \textit{unknown} Nash equilibrium is given by
\begin{align}
\theta^{\ast} = - H^{-1} h \,. \label{eq:NE}
\end{align}
\end{assum}
%
\begin{assum} \label{assum:SDD} The \textit{unknown }matrix $H$ is strictly diagonally dominant \cite{FKB:2012}, {\it i.e.},
\begin{align}
\sum_{j \neq i} |H_{ij}^{i}| < |H_{ii}^{i}|, \quad \forall i \in \{1, \dots, N\}. 
\end{align}
\end{assum}

Under Assumption~\ref{assum:SDD}, $H$ is invertible and the Nash equilibrium $\theta^{\ast}$ exists and is unique.

\section{Nash Equilibrium Seeking with Bounded Update Rates} \label{sec3}

\begin{figure*}[h!]
	\centering 
         \resizebox{.8\linewidth}{!}{\includegraphics{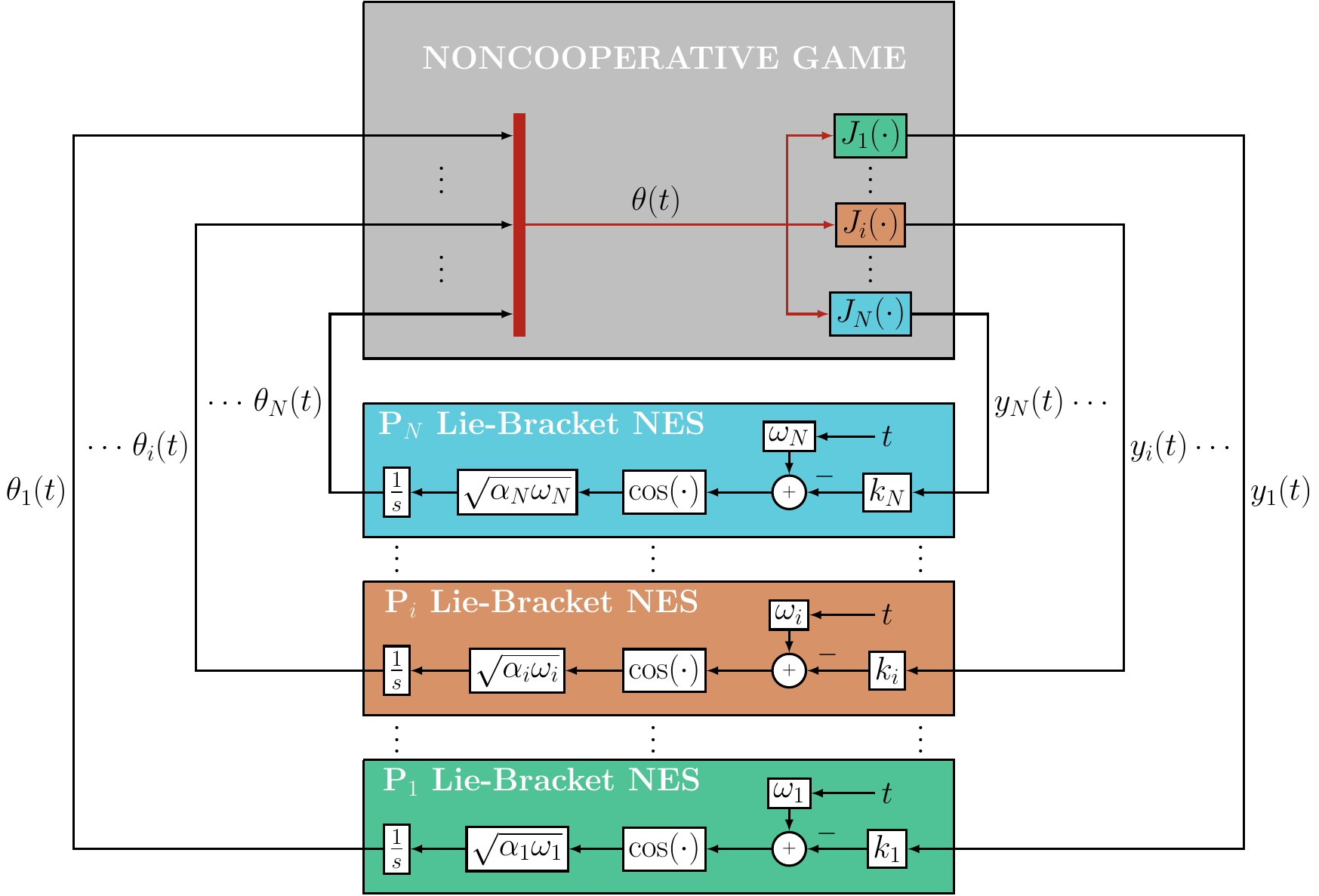}} 
        \caption{Block Diagram of the proposed Lie-bracket Nash equilibrium seeking with bounded update rates.}
        \label{fig:LB-NES_BD}
\end{figure*}

In Figure~\ref{fig:LB-NES_BD}, the Player-$i$ actions are set according to the following time-varying strategy
\begin{align}
\dot{\theta}_{i}(t)&=\sqrt{\alpha_{i}\omega_{i}}\cos(\omega_{i}t-k_{i}J_{i}(\theta))\,, \label{eq:dthetai/dt_v1}
\end{align}
where the frequency is $\omega_{i}$, and $\alpha_{i}$, $k_{i}$ are positive controller gains. \textcolor{black}{The update law (\ref{eq:dthetai/dt_v1}) was inspired by the ES strategy with bounded update rates, as presented in \cite{SK_SCL:2014,SK:2017}. Bounded
ES was introduced to optimize nonlinear maps embedded within oscillatory dynamics, enabling the design of smooth control actions with guaranteed constraints on update rates and control efforts \cite{SK_SCL:2014,S:2024}. This approach has been further extended to include discontinuous dithers \cite{SS:2016}, and to dynamic maps through the use of singularly perturbed Lie-bracket approximations \cite{DKSE:2015}. Furthermore, bounded ES has demonstrated remarkable success in optimizing particle accelerators performance \cite{SHT:2021,SHVGPDK:2020,SBTKZSSBD:2019}.}

\begin{assum} \label{assum:w}
The probing frequencies $\omega_{i}$, for all $i \in \{1\,, \ldots\,,N \}$ satisfy
\begin{align}
\omega_{i} = a_{i} \omega \quad \mbox{and} \quad a_{i} \neq a_j\,, \quad a_{i} \in \mathbb{Q}_{++} \,,
\end{align}
for all $\omega \in (0,\infty)$ and $i,j \in \{1\,, \ldots\,,N \}$, where $\mathbb{Q}_{++}$ denotes the set of positive rational numbers..
\end{assum}

Assumption~\ref{assum:w} regarding the parameter $\omega$ ensures that certain terms vanish in the corresponding Lie-bracket approximation of Section~\ref{sec4}.  For more details, see \cite{DSEJ:2013}.

By using some trigonometric identities,  (\ref{eq:dthetai/dt_v1}) is rewritten as
\begin{align}
\dot{\theta}_{i}(t)&=\sqrt{\alpha_{i}\omega_{i}}\cos(k_{i}J_{i}(\theta(t)))\cos(\omega_{i}t)+\nonumber \\
&\quad+\sqrt{\alpha_{i}\omega_{i}}\sin(k_{i}J_{i}(\theta(t)))\sin(\omega_{i}t)\,. \label{eq:dthetai/dt_v2}
\end{align}
By treating $\sin(\omega_{i} t)$ and $\cos(\omega_{i} t)$ as auxiliary inputs, {\it i.e.}, $u_{1}^{i}(\omega_{i} t) := \sin(\omega_{i} t)$ and $u_{2}^{i}(\omega_{i} t) := \cos(\omega_{i} t)$, we can represent the dynamics in (\ref{eq:dthetai/dt_v2}) into the following input-affine form
\begin{align}
\dot{\theta}_{i}(t)&=\sqrt{\alpha_{i}}\cos(k_{i}J_{i}(\theta(t)))\sqrt{\omega_{i}}u_{2}^{i}(\omega_{i} t)+\nonumber \\
&\quad+\sqrt{\alpha_{i}}\sin(k_{i}J_{i}(\theta(t)))\sqrt{\omega_{i}}u_1^{i}(\omega_{i} t)\,. \label{eq:dthetai/dt_v3}
\end{align}

To simplify the stability analysis, we concatenate all actions of the players into a single vector, $\theta(t) := [\theta_{1}(t)\,, \ldots\,, \theta_{i}(t)\,, \ldots\,,  \theta_{N}(t)]^\top$. This approach allows us to treat the game dynamics as a unified system while preserving the independence of each player's strategy. Thus, the estimation error of the Nash equilibrium obeys the following dynamics
\begin{align}
\dot{\theta}(t)&=\sum_{j=1}^{N}(b_{1}^{j}(t,\theta)\sqrt{\omega_{j}}u_{1}^{j}(\omega_{j} t)+b_{2}^{j}(t,\theta)\sqrt{\omega_{j}}u_2^{j}(\omega_{j} t))\,, \label{eq:dthetaA/dt_v1}
\end{align}
with $b_{1}^{j}(t,\tilde{\theta})$ and $b_{2}^{j}(t,\tilde{\theta})$ having non-zero entries only at positions corresponding to agent $i$ and zeros elsewhere, {\it i.e.}, $$b_{1}^{j}(t,\tilde{\theta}) = [0\,, \dots\,, 0\,, \sqrt{\alpha_{j}}\sin(k_{j}J_{j}(\theta^{\ast}+\tilde{\theta}(t))), 0\,, \ldots\,, 0]^\top $$ and $$b_{2}^{j}(t,\tilde{\theta}) = [0\,, \dots\,, 0\,, \sqrt{\alpha_{j}}\cos(k_{j}J_{j}(\theta^{\ast}+\tilde{\theta}(t)))\,, 0\,, \ldots \,, 0]^\top.$$ Due to Assumption~\ref{assum:w}, we have that $a_i$ can be written as $a_i = \frac{p_i}{q_i}$, with $p_i, q_i \in \mathbb{N}$ and define $q := \prod_{i=1}^N q_i$ and 
%
\begin{equation} \label{tildeomega_golden}
\tilde{\omega} := \frac{\omega}{q}\,.
\end{equation}
Thus, $a_i \omega = p_i \prod_{j \neq i} q_j \tilde{\omega} = n^i \tilde{\omega}$, for $i = 1, \dots, N$, $j = 1, 2$, and $n^{i'} \!:=\! p_{i'} \prod_{j \neq i'} q_j \!\in\! \mathbb{N}$. Hence, we can rewrite (\ref{eq:dthetaA/dt_v1}) as
{\small
\begin{align}
\dot{\theta}(t)&=\sum_{j=1}^{N}(\sqrt{n_{j}}b_{1}^{j}(t,\theta)\sqrt{\tilde{\omega}}u_{1}^{j}(n_{j}\tilde{\omega} t)+\sqrt{n_{j}}b_{2}^{j}(t,\theta)\sqrt{\tilde{\omega}}u_2^{j}(n_{j}\tilde{\omega} t))\,. \label{eq:dthetaA/dt_v2}
\end{align}
}
$\!\!\!$Since the auxiliary inputs $u_{1}^{j}(n_{j}\tilde{\omega} t)$ and $u_{2}^{j}(n_{j}\tilde{\omega} t)$ appear linearly in the dynamics (\ref{eq:dthetaA/dt_v2}), influencing the system through a direct linear combination with the vector fields $b_{1}^{j}(\tilde{\theta})$ and $b_{2}^{j}(\tilde{\theta})$, respectively, and, in addition, the fast oscillatory nature of $u_{1}^{j}(n_{j}\tilde{\omega} t)$ and $u_{2}^{j}(n_{j}\tilde{\omega} t)$ introduces non-integrable motion constraints that prevent the dynamics from being expressed as simple algebraic or integrable relationships between $\tilde{\theta}$ and $t$, thus, the dynamics in (\ref{eq:dthetaA/dt_v2}) is classified as affine and \textit{nonholonomic} \cite{GL:1993} with respect to the inputs $u_{1}^{j}(n_{j}\tilde{\omega} t)$ and $u_{2}^{j}(n_{j}\tilde{\omega} t)$, for all $j = \{1\,, \ldots \,, N\}$. 

\section{Lie-Bracket Approximations} \label{sec4}

Now, Lie-bracket approximation technique is employed to analyze the asymptotic behavior of the nonholonomic dynamics (\ref{eq:dthetaA/dt_v2}) subjected to highly oscillatory inputs $u_{1}^{j}(n_{j}\tilde{\omega} t)$ and $u_{2}^{j}(n_{j}\tilde{\omega} t)$, with $j = \{1\,, \ldots \,, N\}$. The variables that depend on $ \tilde{\omega} t$ are identified as the fast variables, while those dependent of $\tilde{\theta}$ and $t$ are treated as the slow variables. From \cite{DSEJ:2013}, it is possible to simplify the system for sufficiently large $\tilde{\omega}$ by deriving an ``average'' system that effectively eliminates the fast variable and provides a reliable approximation of the original system described by (\ref{eq:dthetaA/dt_v2}). We describes this Lie-bracket approximation of (\ref{eq:dthetaA/dt_v2}) by means of the next Lie-bracket NES equations: 
\begin{align}
\dot{\bar{\theta}}(t)&=\sum_{j=1}^{N}[\sqrt{n_{j}}b_{1}^{j},\sqrt{n_{j}}b_{2}^{j}](t,\bar{\theta})\nu_{kl}^{ji}(t)\,, \quad \bar{\theta}(0)=\theta(0)\,, \label{eq:dtheta-/dt_v1} \\
\nu_{kl}^{ji}(t)&=\frac{1}{T}\int_{0}^{T}u_{k}^{i}(n_{i}\beta)\int_{0}^{\beta}u_{l}^{j}(n_{j}\alpha)d\alpha d\beta\,. \label{eq:nu_v1} 
\end{align}
The average dynamics (\ref{eq:dtheta-/dt_v1}) involves Lie brackets $[b_{1}^{j}\,,b_{2}^{j}]$, which represent the commutators of vector fields associated with the auxiliary control inputs. Lie brackets indicate that the system's controllable motion is not confined to the original directions of the vector fields $b_{1}^{j}$ and $b_{2}^{j}$ but also includes motion generated by the interaction of these fields.  The variable\linebreak $\bar\theta$ generated by (\ref{eq:dtheta-/dt_v1}), along with (\ref{eq:nu_v1}), also referred to as the Lie-bracket system, approximates the behavior of (\ref{eq:dthetaA/dt_v2}). 

Since $u_{k}^{j}(n_{j}\Theta)$ in (\ref{eq:dthetaA/dt_v2}) belongs to the set $\{\sin(n_{j} \Theta), \cos(n_{j} \Theta)\}$, they are $T$-periodic functions in terms of $n_{j} \Theta$, with $j = \{1, \ldots, N\}$, $k = \{1, 2\}$, and $n_{j} \in \mathbb{N}$, {\it i.e.}, $T=2\pi$. Thus, the parameters $\nu_{kl}^{ji}(t)$ in the following are constant for all $i, j = 1, . . . , N$ and $k, l = 1, 2$, such that 
\begin{align}
\nu_{kl}^{ji}(t)&=\nu_{kl}^{ji}= \displaystyle\begin{cases} -\frac{1}{2n_{j}}\,, \quad \mbox{ for all } n_{i}=n_{j} \mbox{ and } k=l\,, \\ ~~~0~~\,, \quad \mbox{ otherwise } \end{cases}\,. 
\end{align}
Therefore, (\ref{eq:dtheta-/dt_v1}) is rewritten as
\begin{align}
\dot{\bar{\theta}}(t)&=\sum_{j=1}^{N}[\sqrt{n_{j}}b_{1}^{j},\sqrt{n_{j}}b_{2}^{j}](t,\bar{\theta})\left(-\frac{1}{2n_{j}}\right)\nonumber \\
&=-\frac{1}{2}\sum_{j=1}^{N}[b_{1}^{j},b_{2}^{j}](t,\bar{\theta}) \,, \quad \bar{\theta}(0)=\theta(0) \,. \label{eq:dtheta-/dt_v3}
\end{align}
On the other hand, the Lie-bracket between the vector fields $b_{1}^{j}(t,\bar{\theta})$ and $b_{2}^{j}(t,\bar{\theta})$, defined by $[b_{1}^{j},b_{2}^{j}](t,\bar{\theta})= \dfrac{\partial b_{2}^{j}(t,\bar{\theta})}{\partial \bar{\theta}}b_{1}^{j}(t,\bar{\theta})-\dfrac{\partial b_{1}^{j}(t,\bar{\theta})}{\partial \bar{\theta}}b_{2}^{j}(t,\bar{\theta})$, with 
\begin{small}
\begin{align}
\dfrac{\partial b_{1}^{j}(t,\bar{\theta})}{\partial \bar{\theta}} &= \begin{bmatrix}0 & \ldots & 0 & \ldots & 0 \\ \vdots & \ldots & \vdots & \ldots & \vdots \\ 0 & \ldots & 0 & \ldots & 0 \\ 0 & \ldots & \sqrt{\alpha_{j}}k_{j}\cos(k_{j}J_{j}(\bar{\theta}))\dfrac{\partial J_{j}(\bar{\theta})}{\partial \bar{\theta}_{j}} & \ldots & 0 \\ 0 & \ldots & 0 & \ldots & 0 \\ \vdots & \ldots & \vdots & \ldots & \vdots \\ 0 & \ldots & 0 & \ldots & 0 \end{bmatrix}\,, \\
\dfrac{\partial b_{2}^{j}(t,\bar{\theta})}{\partial \bar{\theta}} &= \begin{bmatrix}0 & \ldots & 0 & \ldots & 0 \\ \vdots & \ldots & \vdots & \ldots & \vdots \\ 0 & \ldots & 0 & \ldots & 0 \\ 0 & \ldots & -\sqrt{\alpha_{j}}k_{j}\sin(k_{j}J_{j}(\bar{\theta}))\dfrac{\partial J_{j}(\bar{\theta})}{\partial \bar{\theta}_{j}} & \ldots & 0 \\ 0 & \ldots & 0 & \ldots & 0 \\ \vdots & \ldots & \vdots & \ldots & \vdots \\ 0 & \ldots & 0 & \ldots & 0 \end{bmatrix} \,,
\end{align}
\end{small}
$\!\!\!$yields
\begin{small}
\begin{align} 
[b_{1}^{j},b_{2}^{j}](t,\bar{\theta})=-\left[0\,, \dots\,, 0\,, \alpha_{j}k_{j}\dfrac{\partial J_{j}(\bar{\theta})}{\partial \bar{\theta}_{j}}, 0\,, \ldots\,, 0\right]^\top \,. \label{eq:[b1,b2]}
\end{align}
\end{small}
$\!\!\!$By plugging (\ref{eq:[b1,b2]}) into (\ref{eq:dtheta-/dt_v3}), one has 
\begin{small}
\begin{align}
&\dot{\bar{\theta}}(t)=\frac{1}{2}\left[\alpha_{1}k_{1}\dfrac{\partial J_{1}(\bar{\theta})}{\partial \bar{\theta}_{1}}\,, \dots\,, \alpha_{i}k_{i}\dfrac{\partial J_{i}(\bar{\theta})}{\partial \bar{\theta}_{i}}\,, \ldots\,, \alpha_{N}k_{N}\dfrac{\partial J_{N}(\bar{\theta})}{\partial \bar{\theta}_{N}}\right]^\top\nonumber \\
&=\frac{1}{2}AK \nabla J(\bar{\theta})\,, \quad \bar{\theta}(0)=\theta(0)\,,  \label{eq:dtheta-/dt_v4} \\
&\nabla J(\bar{\theta}):=\left[\dfrac{\partial J_{1}(\bar{\theta})}{\partial \bar{\theta}_{1}}\,, \dots\,, \dfrac{\partial J_{i}(\bar{\theta})}{\partial \bar{\theta}_{i}}\,, \ldots\,, \dfrac{\partial J_{N}(\bar{\theta})}{\partial \bar{\theta}_{N}}\right]^\top\,, \label{eq:nablaJ_v1} \\
&A=
\begin{bmatrix} \alpha_{1} &      0     & \ldots &    0       \\
                    0      & \alpha_{2} & \ddots &  \vdots    \\ 
								 \vdots    &   \ddots   & \ddots &    0       \\
										0      &   \ldots   &   0    & \alpha_{N}
\end{bmatrix}	\,, \quad
K=
\begin{bmatrix}    k_{1}   &      0     & \ldots &    0       \\
                    0      &      k_{2} & \ddots &  \vdots    \\ 
								 \vdots    &   \ddots   & \ddots &    0       \\
										0      &   \ldots   &   0    &   k_{N}
\end{bmatrix}\,.										
\end{align}
\end{small}
$\!\!\!$We note that, for each differentiable payoff function $J_{i} : \mathbb{R}^{N} \mapsto \mathbb{R}$ in (\ref{eq:Ji_v1}), the gradient of $J_{i}$ at $(\theta_{1}\,, \ldots\,,\theta_{N})$ is the vector in the space given by $\nabla J_{i} : = \left[\frac{\partial J_{i}}{\partial \theta_1}\,, \ldots\,, \frac{\partial J_{i}}{\partial \theta_i} \,,\ldots \,, \frac{\partial J_{i}}{\partial \theta_N}\right]^{\top}$. \textcolor{black}{The gradients point into the direction in which the value of $J_{i}$ is  increasing the fastest and the direction that is orthogonal to the level surfaces of $J_{i}$ \cite{MT:2003}.} Nevertheless, in the context of noncooperative games, the {\it pseudo-gradient} (\ref{eq:nablaJ_v1}) is preferred over the true gradient because it effectively captures the interdependence of players' strategies and payoff functions, a critical aspect of such games. Unlike the gradient, which requires a single scalar potential function, the pseudo-gradient combines the individual gradients of each player's utility function into a unified framework, enabling the analysis and computation of the Nash equilibrium.

By calculating the partial derivative of $J_{i}$ in (\ref{eq:Ji_v1}) with respect to $\bar{\theta}_{i}$, one has
\begin{align}
\frac{\partial J_{i}(\bar{\theta}(t))}{\partial \bar{\theta}_{i}(t)}& = \sum_{j=1}^{N} H_{ij}^{i} \bar{\theta}_j(t) +  h_{i}^{i} \,, \label{eq:dJi/dthetai_v1}
\end{align}
thus, the {\it pseudo-gradient} (\ref{eq:nablaJ_v1}) can be written as
\begin{align}
\nabla J(\bar{\theta}):=H\bar{\theta}(t)+h\,, \label{eq:nablaJ_v2}
\end{align}  
with $H$ and $h$ given by (\ref{eq:NEcond_v3}). Therefore, the dynamics in (\ref{eq:dtheta-/dt_v4}) becomes 
\begin{align}
\dot{\bar{\theta}}(t)&=\frac{1}{2}AK (H\bar{\theta}(t)+h)\,, \quad \bar{\theta}(0)=\theta(0) \,. \label{eq:dtheta-/dt_v5} 									
\end{align}

The goal of the control strategy is to make $\tilde{\theta}(t)$ as small as possible, so that the Lie-bracket approximation $\bar{\theta}(t)$ is driven to the Nash equilibrium $\theta^{\ast}$ and the output $\theta(t)$ to a small neighborhood of this. 

\section{Stability Analysis} \label{sec5}

\begin{theorem}
For the $N$-player quadratic noncooperative game in (\ref{eq:Ji_v1}), consider the dynamics of the concatenated strategies employed by the players (\ref{eq:dthetaA/dt_v2}). The Lie-bracket approximation is given by (\ref{eq:dtheta-/dt_v5}). Under Assumptions~\ref{assum:Nash}, \ref{assum:SDD}, and \ref{assum:w}, for $\|\theta(0)-\theta^{\ast}\|$ sufficiently small and $\tilde{\omega}$ in (\ref{tildeomega_golden}) sufficiently large, there exist constants $M,m>0$ such that, for all $t\geq 0$: 
\begin{align}
\|\theta(t)-\theta^{\ast}\| \leq M \exp(-mt)\|\theta(0)-\theta^{\ast}\|+\mathcal{O}\left(\frac{1}{\tilde{\omega}}\right)\,, \label{eq:thm}
\end{align}
where $\|\theta(t)-\theta^{\ast}\|=\sqrt{(\theta(t)-\theta^{\ast})^{\top}(\theta(t)-\theta^{\ast})}$ is the Euclidean norm of the discrepancy between the players' actions and the Nash equilibrium. 
\end{theorem}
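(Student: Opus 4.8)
The plan is to first establish that the Lie-bracket (averaged) system (\ref{eq:dtheta-/dt_v5}) is globally exponentially stable toward $\theta^\ast$, and then to transport this conclusion to the original highly oscillatory dynamics (\ref{eq:dthetaA/dt_v2}) through the Lie-bracket averaging theorem of \cite{DSEJ:2013} recalled in the Appendix. \textbf{Step 1 (reduction of the averaged system and its Hurwitz property).} First I would pass to the error coordinate $\tilde{\bar\theta}(t):=\bar\theta(t)-\theta^\ast$. Using the Nash condition (\ref{eq:NEcond_v2}), i.e.\ $H\theta^\ast+h=0$, equation (\ref{eq:dtheta-/dt_v5}) collapses to the linear time-invariant system $\dot{\tilde{\bar\theta}}=\tfrac12 AKH\,\tilde{\bar\theta}$. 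The structural fact to prove is that $\tfrac12 AKH$ is Hurwitz: since $A,K$ are positive diagonal, the $i$-th row of $AKH$ equals $\alpha_ik_i$ times the $i$-th row of $H$, so its diagonal entry $\alpha_ik_iH_{ii}^i$ is negative, while Assumption~\ref{assum:SDD} gives $\sum_{j\neq i}|\alpha_ik_iH_{ij}^i|=\alpha_ik_i\sum_{j\neq i}|H_{ij}^i|<\alpha_ik_i|H_{ii}^i|=|\alpha_ik_iH_{ii}^i|$. Hence $AKH$ is again strictly diagonally dominant with negative diagonal, and Gershgorin's disk theorem places its spectrum in the open left half-plane.

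\textbf{Step 2 (Lyapunov certificate for the average).} Next I would fix any $Q=Q^\top>0$ and take $P=P^\top>0$ to be the unique solution of $(\tfrac12 AKH)^\top P+P(\tfrac12 AKH)=-Q$, which exists by Step 1. The quadratic form $V(\tilde{\bar\theta})=\tilde{\bar\theta}^\top P\tilde{\bar\theta}$ then obeys $\dot V=-\tilde{\bar\theta}^\top Q\tilde{\bar\theta}\le-(\lambda_{\min}(Q)/\lambda_{\max}(P))V$, so $\bar\theta(t)$ converges to $\theta^\ast$ globally and exponentially, with overshoot and rate constants explicit in $\lambda_{\min}(Q),\lambda_{\min}(P),\lambda_{\max}(P)$.

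\textbf{Step 3 (transfer to the oscillatory system).} Finally I would invoke the Lie-bracket approximation theorem from the Appendix, checking its hypotheses for (\ref{eq:dthetaA/dt_v2}): the dithers $u_k^j(n_j\tilde\omega t)\in\{\sin,\cos\}$ are periodic with common period $2\pi$ in the scaled time, have zero mean, and enter affinely; the vector fields $b_1^j,b_2^j$ are globally bounded (each is $\sqrt{\alpha_j}$ times a sine/cosine) and, together with the partials appearing in (\ref{eq:[b1,b2]}), are continuous and bounded with bounded derivatives on every compact set of $\bar\theta$ (the only growing factor being $\partial J_j/\partial\bar\theta_j$, which is affine in $\bar\theta$); and Assumption~\ref{assum:w} forces every non-resonant cross term ($n_i\neq n_j$) to average to zero, leaving exactly the constants $\nu_{kl}^{ji}$ and hence the Lie-bracket system (\ref{eq:dtheta-/dt_v5}). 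The theorem then upgrades the global exponential stability of Step 2 into semiglobal practical exponential stability of (\ref{eq:dthetaA/dt_v2}): for $\|\theta(0)-\theta^\ast\|$ in a fixed compact set and $\tilde\omega$ large enough, $\|\theta(t)-\bar\theta(t)\|=\mathcal{O}(1/\tilde\omega)$ uniformly on $[0,\infty)$, and combining this with the exponential bound on $\|\bar\theta(t)-\theta^\ast\|$ yields (\ref{eq:thm}) with $M$ essentially the Step-2 overshoot constant and $m$ slightly below the Step-2 rate.

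\textbf{Main obstacle.} The delicate point is Step 3: the averaging results of \cite{DSEJ:2013} are typically phrased under global boundedness/Lipschitz conditions, whereas here $\partial b_k^j/\partial\bar\theta$ grows linearly in $\bar\theta$. The remedy is to exploit the \emph{local} nature of the claim — work on a compact sublevel set $\{V\le c\}$ that is forward invariant for the averaged flow (it is, since that flow strictly decreases $V$), smoothly saturate the vector fields outside a slightly larger set so that the modified system coincides with the original on $\{V\le c\}$ while meeting the global hypotheses, apply the theorem to the modified system, and then argue that for $\|\theta(0)-\theta^\ast\|$ small and $\tilde\omega$ large the true trajectory never leaves $\{V\le c\}$ (using the $\mathcal{O}(1/\tilde\omega)$ closeness together with the contractivity of the averaged flow in the $P$-norm). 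Carefully tracking the quantifiers — how small the initial error, how large $\tilde\omega$, and the constant hidden in $\mathcal{O}(1/\tilde\omega)$ — is the remaining bookkeeping.
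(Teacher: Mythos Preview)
Your proposal is correct and follows essentially the same route as the paper: reduce the Lie-bracket system to the linear error dynamics $\dot{\tilde\theta}=\tfrac12 AKH\,\tilde\theta$, use strict diagonal dominance plus Gershgorin to conclude Hurwitzness, build a quadratic Lyapunov function from the Lyapunov equation, and then combine the resulting exponential bound with the $\mathcal{O}(1/\tilde\omega)$ trajectory-approximation estimate from the Appendix via the triangle inequality. Your ``Main obstacle'' paragraph is in fact more careful than the paper, which simply invokes the averaging theorem without discussing the growth of $\partial b_k^j/\partial\bar\theta$ or the need to localize to a compact sublevel set; the paper's explicit constants are $M=\sqrt{\lambda_{\max}(P)/\lambda_{\min}(P)}$ and $m=\lambda_{\min}(Q)/\lambda_{\max}(P)$, exactly the Step~2 values rather than anything ``slightly below.''
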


\begin{proof}
For the purpose of analysis, we define the estimation error between the players' actions and the Nash equilibrium as 
\begin{align}
\tilde{\theta}(t) = \bar{\theta}(t)-\theta^{\ast}\,, \label{eq:thetaA_v1}
\end{align}
whose time-derivative is given by 
\begin{align}
\dot{\tilde{\theta}}(t)&=\dot{\bar{\theta}}(t) \nonumber \\
&=\frac{1}{2}AK (H\bar{\theta}(t)+h) \nonumber \\
&=\frac{1}{2}AK (H\tilde{\theta}(t)+\underbrace{H\theta^{\ast}+h}_{\text{\scriptsize{ \mbox{$=0$ , from (\ref{eq:NEcond_v1})}}}}) \nonumber \\
&=\frac{1}{2}AK H\tilde{\theta}(t)\,. \label{eq:dthetaA/dt_v6} 									
\end{align}

The average error system (\ref{eq:dthetaA/dt_v6}) can be rewritten as
\begin{align}
\dot{\tilde{\theta}}(t)&=\mathcal{A}\tilde{\theta}(t)\,, \label{eq:dthetaA/dt_v7}
\quad \mathcal{A}=\begin{bmatrix}
\kappa_{1}H_{11}^{1} & \kappa_{1}H_{12}^{1} & \cdots & \kappa_{1}H_{1N}^{1} \\
\kappa_{2}H_{21}^{2} & \kappa_{2}H_{22}^{2} & \cdots & \kappa_{2}H_{2N}^{2} \\
\vdots & \vdots & \ddots & \vdots \\
\kappa_{N}H_{N1}^{N} & \kappa_{N}H_{N2}^{N} & \cdots & \kappa_{N}H_{NN}^{N}
\end{bmatrix} 	\,, 								
\end{align}
where $\kappa_{i}=\dfrac{\alpha_{i}k_{i}}{2}$ for all $i \in \{1, \ldots, N\}$. From the Gershgorin Circle Theorem \cite[Theorem 6.1.1]{HJ:1985}, we have 
$\lambda(\mathcal{A}) \subseteq \bigcup_{i=1}^N \rho_i$, where $\lambda(\mathcal{A})$ denotes the spectrum of $\mathcal{A}$, and $\rho_i$ is a Gershgorin disc:
\begin{align}
\rho_i = \frac{\alpha_{i}k_{i}}{2} \left\{ z \in \mathbb{C} \ \middle| \ |z - H_{ii}^i| < \sum_{j \neq i} |H_{ij}^i| \right\}\,.
\end{align}
Since $H_{ii}^i < 0$ and $H$ is strictly diagonally dominant, the union of the Gershgorin discs lies strictly in the left half of the complex plane, and we conclude that $\text{Re}\{\lambda\} < 0$, for all $\lambda \in \lambda(\mathcal{A})$. Thus, given any matrix $Q = Q^\top > 0$, there exists a matrix $P = P^\top > 0$ satisfying the Lyapunov equation $P\mathcal{A} + \mathcal{A}^\top P = -Q$.

Now, consider the following candidate Lyapunov function for the error system (\ref{eq:dthetaA/dt_v7})
\begin{align}
V(t)=\tilde{\theta}^{\top}(t)P\tilde{\theta}(t) \,,\, P=P^T>0, \label{eq:V_v1}
\end{align}  
with time-derivative satisfying 
\begin{align}
\dot{V}(t)&=\tilde{\theta}^{\top}(t)\mathcal{A}^{\top}P\tilde{\theta}(t)+\tilde{\theta}^{\top}(t)P\mathcal{A}\tilde{\theta}(t) \nonumber \\
&=-\tilde{\theta}^{\top}(t)Q\tilde{\theta}(t) \nonumber \\
&\leq -\lambda_{\min}(Q)\|\tilde{\theta}(t)\|^2\,. \label{eq:dV_v1}
\end{align}
By using the Rayleigh-Ritz Inequality \cite{K:2002}, 
\begin{align}
\lambda_{\min}(P)\|\tilde{\theta}(t)\|^2\leq V(t) \leq \lambda_{\max}(P)\|\tilde{\theta}(t)\|^2\,, \label{eq:RR}
\end{align}
an upper bound for (\ref{eq:dV_v1}) is given by
\begin{align}
\dot{V}(t)&\leq -\frac{\lambda_{\min}(Q)}{\lambda_{\max}(P)}V(t)\,. \label{eq:dV_v2}
\end{align}
Then, invoking \cite[Comparison Lemma]{K:2002} an upper bound $\bar{V}(t)$ for $V(t)$,   
\begin{align}
V(t)\leq \bar{V}(t) \,, \quad \forall t \geq 0 \,, \label{eq:VV-_v1}
\end{align}
 is given by the solution of the equation
\begin{align}
\dot{\bar{V}}(t)&= -\frac{\lambda_{\min}(Q)}{\lambda_{\max}(P)}\bar{V}(t)\,, \quad \bar{V}(0)=V(0)\,.
\end{align}
In other words,
\begin{align}
\bar{V}(t)=\exp\left(-\frac{\lambda_{\min}(Q)}{\lambda_{\max}(P)}t\right)V(0)\,, \label{eq:V-_v1}
\end{align}
and inequality (\ref{eq:VV-_v1}) is rewritten as
\begin{align}
V(t)\leq \exp\left(-\frac{\lambda_{\min}(Q)}{\lambda_{\max}(P)}t\right)V(0) \,, \quad \forall t \geq 0 \,. \label{eq:VV-_v2}
\end{align}
Now, lower bounding the left-hand side and upper bounding the right-hand size of (\ref{eq:VV-_v2}), with the corresponding sides of (\ref{eq:RR}), one gets
\begin{align}
\lambda_{\min}(P)\|\tilde{\theta}(t)\|^2\leq \exp\left(-\frac{\lambda_{\min}(Q)}{\lambda_{\max}(P)}t\right)\lambda_{\max}(P)\|\tilde{\theta}(0)\|^2 \,, 
\end{align}
and, consequently,
\begin{align}
\|\tilde{\theta}(t)\|\leq \sqrt{\frac{\lambda_{\max}(P)}{\lambda_{\min}(P)}}\exp\left(-\frac{\lambda_{\min}(Q)}{\lambda_{\max}(P)}t\right)\|\tilde{\theta}(0)\| \,, \quad \forall t \geq 0 \,.  \label{eq:VV-_v3}
\end{align}
By adding $\theta(t)$ to both sides of (\ref{eq:thetaA_v1}) such that $\tilde{\theta}(t)+\theta(t)=\bar{\theta}(t)-\theta^{\ast}+\theta(t)$ or, equivalently,  $\theta(t)-\theta^{\ast}=\tilde{\theta}(t)+\theta(t)-\bar{\theta}(t)$, one can write down the norm as: 
\begin{align}
\|\theta(t)-\theta^{\ast}\|=\|\tilde{\theta}(t)+\theta(t)-\bar{\theta}(t)\| \,, \quad \forall t \geq 0 \,.  \label{eq:normTheta_v1}
\end{align}
By using the Triangle inequality, one has $\|\tilde{\theta}(t)+\theta(t)-\bar{\theta}(t)\|\leq \|\tilde{\theta}(t)\|+\|\theta(t)-\bar{\theta}(t)\|$, which is upper bounded by  
\begin{align}
\|\theta(t)-\theta^{\ast}\|\leq \|\tilde{\theta}(t)\|+\|\theta(t)-\bar{\theta}(t)\| \,, \quad \forall t \geq 0 \,.  \label{eq:normTheta_v2}
\end{align}
Since (\ref{eq:dtheta-/dt_v5}) is a Lie-bracket approximation for the nonholonomic dynamics (\ref{eq:dthetaA/dt_v2}) with highly oscillatory inputs of multiple frequencies of $\tilde{\omega}$, by invoking  \cite[Theorem~2.1]{GL:1993}---see also the Appendix, one gets 
\begin{align}
\|\theta(t)-\bar{\theta}(t)\| \leq \mathcal{O}\left(\frac{1}{\tilde{\omega}}\right)\,, \quad \forall t \geq 0 \,,  \label{eq:normTheta_v3}
\end{align}     
where $\mathcal{O}\left(\frac{1}{\tilde{\omega}}\right)$ is a residual set which tends to zero as $\tilde{\omega} \to \infty$. 
Now, by using (\ref{eq:VV-_v3}) and (\ref{eq:normTheta_v3}), inequality (\ref{eq:normTheta_v2}) is upper bounded by
\begin{align}
\|\theta(t)-\theta^{\ast}\|&\leq \sqrt{\frac{\lambda_{\max}(P)}{\lambda_{\min}(P)}}\exp\left(-\frac{\lambda_{\min}(Q)}{\lambda_{\max}(P)}t\right)\|\tilde{\theta}(0)\| \nonumber \\
&\quad+\mathcal{O}\left(\frac{1}{\tilde{\omega}}\right) \,, \quad \forall t \geq 0 \,.  \label{eq:normTheta_v4}
\end{align}
From (\ref{eq:dtheta-/dt_v5}), by setting $\bar{\theta}(0)=\theta(0)$ and, therefore, from (\ref{eq:normTheta_v1}), $\|\tilde{\theta}(0)\|=\|\theta(0)-\theta^{\ast}\|$, we are able to rewrite (\ref{eq:normTheta_v4}) as 
\begin{align}
\|\theta(t)-\theta^{\ast}\|&\leq \sqrt{\frac{\lambda_{\max}(P)}{\lambda_{\min}(P)}}\exp\left(-\frac{\lambda_{\min}(Q)}{\lambda_{\max}(P)}t\right)\|\theta(0)-\theta^{\ast}\| \nonumber \\
&\quad+\mathcal{O}\left(\frac{1}{\tilde{\omega}}\right) \,, \quad \forall t \geq 0 \,.  \label{eq:normTheta_v5}
\end{align}
Finally, inequality (\ref{eq:normTheta_v5}) satisfies (\ref{eq:thm}) with constants
\begin{align}
M&=\sqrt{\frac{\lambda_{\max}(P)}{\lambda_{\min}(P)}} \,, \label{eq:M} \\
m&=\frac{\lambda_{\min}(Q)}{\lambda_{\max}(P)}\,, \label{eq:m}
\end{align}
which completes the proof.
\end{proof}

\section{Simulation Results} \label{sec6}

This section presents simulation results by illustrating the performance of the proposed Lie-bracket NES control scheme. The system under study is a static noncooperative game with four firms in an oligopoly market structure that compete to maximize their profits $J_{i}(t)$ by setting the price $u_{i}(t)$ of their product without sharing information among the players  (see \cite[section 4.6]{Basar:1999} and \cite{FKB:2012} for background). Hence, consider the profits being given by 
\begin{align}
J_{1}(t)&=\frac{1}{2}u^{T}(t)H_{1}u(t)+h_{1}^{T}u(t)+c_{1}\,, \label{eq:J1_20240311} \\
J_{2}(t)&=\frac{1}{2}u^{T}(t)H_{2}u(t)+h_{2}^{T}u(t)+c_{2}\,, \label{eq:J2_20240311} \\
J_{2}(t)&=\frac{1}{2}u^{T}(t)H_{3}u(t)+h_{3}^{T}u(t)+c_{3}\,, \label{eq:J3_20240311} \\
J_{4}(t)&=\frac{1}{2}u^{T}(t)H_{4}u(t)+h_{4}^{T}u(t)+c_{4}\,, \label{eq:J4_20240311}
\end{align}
with vector of prices $u(t):=[u_{1}(t) \ u_{2}(t) \ u_{3}(t) \ u_{4}(t)]^{T} \in \mathbb{R}^{4}$ and their corresponding parameters 
\begin{align}
&H_{1}= \frac{1}{(R_{2}R_{3}R_{4}+R_{1}R_{3}R_{4}+R_{1}R_{2}R_{4}+R_{1}R_{2}R_{3})}\times \nonumber \\
&\times\begin{bmatrix} -2(R_{3}R_{4}+R_{2}R_{4}+R_{2}R_{3}) &R_{3}R_{4} & R_{2}R_{4} & R_{2}R_{3} \\ 
R_{3}R_{4} & 0 & 0 & 0 \\
R_{2}R_{4} & 0 & 0 & 0 \\
R_{2}R_{3} & 0 & 0 & 0 \end{bmatrix} \,, \\ 
&h_{1} = \frac{1}{(R_{2}R_{3}R_{4}+R_{1}R_{3}R_{4}+R_{1}R_{2}R_{4}+R_{1}R_{2}R_{3})} \times \nonumber \\
&\times\begin{bmatrix} m_{1}(R_{3}R_{4}+R_{2}R_{4}+R_{2}R_{3})+S_{d}R_{2}R_{3}R_{4}  \\ 
-m_{1}R_{3}R_{4}  \\
-m_{1}R_{2}R_{4}  \\
-m_{1}R_{2}R_{3}  \end{bmatrix}\,, \\
&c_{1} = -\frac{m_{1}S_{d}R_{2}R_{3}R_{4}}{(R_{2}R_{3}R_{4}+R_{1}R_{3}R_{4}+R_{1}R_{2}R_{4}+R_{1}R_{2}R_{3})}\,,
\end{align}

\begin{align}
&H_{2}= \frac{1}{(R_{2}R_{3}R_{4}+R_{1}R_{3}R_{4}+R_{1}R_{2}R_{4}+R_{1}R_{2}R_{3})}\times \nonumber \\
&\times\begin{bmatrix} 0 & R_{3}R_{4} & 0 & 0 \\ 
 R_{3}R_{4} & -2(R_{3}R_{4}+R_{1}R_{4}+R_{1}R_{3}) & R_{1}R_{4} & R_{1}R_{3} \\
0 & R_{1}R_{4} & 0 & 0 \\
0 & R_{1}R_{3} & 0 & 0 \end{bmatrix} \,, \\ 
&h_{2} = \frac{1}{(R_{2}R_{3}R_{4}+R_{1}R_{3}R_{4}+R_{1}R_{2}R_{4}+R_{1}R_{2}R_{3})} \times \nonumber \\
&\times\begin{bmatrix} -m_{2}R_{3}R_{4} \\ 
m_{2}(R_{3}R_{4}+R_{1}R_{4}+R_{1}R_{3})+S_{d}R_{1}R_{3}R_{4}  \\
-m_{2}R_{1}R_{4}  \\
-m_{2}R_{1}R_{3}  \end{bmatrix}\,, \\
&c_{2} = -\frac{m_{2}S_{d}R_{1}R_{3}R_{4}}{(R_{2}R_{3}R_{4}+R_{1}R_{3}R_{4}+R_{1}R_{2}R_{4}+R_{1}R_{2}R_{3})}\,,
\end{align}

\begin{align}
&H_{3}= \frac{1}{(R_{2}R_{3}R_{4}+R_{1}R_{3}R_{4}+R_{1}R_{2}R_{4}+R_{1}R_{2}R_{3})}\times \nonumber \\
&\times \begin{bmatrix} 0 & 0 & R_{2}R_{4} & 0 \\ 
 0 & 0 &  R_{1}R_{4} & 0 \\
R_{2}R_{4} & R_{1}R_{4} & -2(R_{2}R_{4}+R_{1}R_{4}+R_{1}R_{2}) & R_{1}R_{2} \\
0 & 0 & R_{1}R_{2} & 0 \end{bmatrix} \,, \\ 
&h_{3} = \frac{1}{(R_{2}R_{3}R_{4}+R_{1}R_{3}R_{4}+R_{1}R_{2}R_{4}+R_{1}R_{2}R_{3})}\times \nonumber \\
&\times\begin{bmatrix} -m_{3}R_{2}R_{4} \\ 
-m_{3}R_{1}R_{4}  \\
m_{3}(R_{2}R_{4}+R_{1}R_{4}+R_{1}R_{2})+S_{d}R_{1}R_{2}R_{4}  \\
-m_{3}R_{1}R_{2}  \end{bmatrix}\,, \\
&c_{3} = -\frac{m_{3}S_{d}R_{1}R_{2}R_{4}}{(R_{2}R_{3}R_{4}+R_{1}R_{3}R_{4}+R_{1}R_{2}R_{4}+R_{1}R_{2}R_{3})}\,, \\
\end{align}

\begin{align}
&H_{4}= \frac{1}{(R_{2}R_{3}R_{4}+R_{1}R_{3}R_{4}+R_{1}R_{2}R_{4}+R_{1}R_{2}R_{3})}\times \nonumber \\
&\times \begin{bmatrix} 0 & 0 & 0 & R_{2}R_{3} \\ 
 0 & 0 &  0 & R_{1}R_{3} \\
0 & 0 & 0 & R_{1}R_{2} \\
R_{2}R_{3} & R_{1}R_{3} & R_{1}R_{2} & -2(R_{2}R_{3}+R_{1}R_{3}+R_{1}R_{2})\end{bmatrix} \,, \\ 
&h_{4} = \frac{1}{(R_{2}R_{3}R_{4}+R_{1}R_{3}R_{4}+R_{1}R_{2}R_{4}+R_{1}R_{2}R_{3})}\times \nonumber \\
&\times\begin{bmatrix} -m_{4}R_{2}R_{3} \\ 
-m_{4}R_{1}R_{3}  \\
-m_{4}R_{1}R_{2} \\
m_{4}(R_{2}R_{3}+R_{1}R_{3}+R_{1}R_{2})+S_{d}R_{1}R_{2}R_{3}  \end{bmatrix}\,, \\
&c_{4} = -\frac{m_{4}S_{d}R_{1}R_{2}R_{3}}{(R_{2}R_{3}R_{4}+R_{1}R_{3}R_{4}+R_{1}R_{2}R_{4}+R_{1}R_{2}R_{3})}\,.
\end{align}

\begin{figure*}
\begin{adjustbox}{addcode={\begin{minipage}{\width}}
{\begin{align}
&H= \frac{1}{(R_{2}R_{3}R_{4}+R_{1}R_{3}R_{4}+R_{1}R_{2}R_{4}+R_{1}R_{2}R_{3})}\times \nonumber \\
&\times\begin{bmatrix} -2(R_{3}R_{4}+R_{2}R_{4}+R_{2}R_{3}) &R_{3}R_{4} & R_{2}R_{4} & R_{2}R_{3} \\ 
 R_{3}R_{4} & -2(R_{3}R_{4}+R_{1}R_{4}+R_{1}R_{3}) & R_{1}R_{4} & R_{1}R_{3} \\
R_{2}R_{4} & R_{1}R_{4} & -2(R_{2}R_{4}+R_{1}R_{4}+R_{1}R_{2}) & R_{1}R_{2} \\
R_{2}R_{3} & R_{1}R_{3} & R_{1}R_{2} & -2(R_{2}R_{3}+R_{1}R_{3}+R_{1}R_{2})\end{bmatrix} \,, \label{eq:H_20240311} \\
&h=\frac{1}{(R_{2}R_{3}R_{4}+R_{1}R_{3}R_{4}+R_{1}R_{2}R_{4}+R_{1}R_{2}R_{3})}\begin{bmatrix} m_{1}(R_{3}R_{4}+R_{2}R_{4}+R_{2}R_{3})+S_{d}R_{2}R_{3}R_{4}  \\ 
m_{2}(R_{3}R_{4}+R_{1}R_{4}+R_{1}R_{3})+S_{d}R_{1}R_{3}R_{4}  \\
m_{3}(R_{2}R_{4}+R_{1}R_{4}+R_{1}R_{2})+S_{d}R_{1}R_{2}R_{4}  \\
m_{4}(R_{2}R_{3}+R_{1}R_{3}+R_{1}R_{2})+S_{d}R_{1}R_{2}R_{3}  \end{bmatrix}\,. \label{eq:h_20240311}
\end{align}
\end{minipage}},rotate=90,center}
\end{adjustbox}
\end{figure*}

The constants $m_{1}$, $m_{2}$, $m_{3}$ and $m_{4}$ represent the marginal costs, $S_{d}$ is the total consumer demand, and $R_{1}$, $R_{2}$, $R_{3}$ and $R_{4}$ describe the resistance that consumers have toward buying a given product. This reluctance
 may be due to the quality or the brand image considerations---the most desirable products have lowest resistance. The payoff functions (\ref{eq:J1_20240311})--(\ref{eq:J4_20240311}) are in the form (\ref{eq:Ji_v1}). Therefore, the Nash equilibrium $\theta^{\ast} = [\theta_{1}^{\ast}\,, \theta_{2}^{\ast}\,, \theta_{3}^{\ast}\,, \theta_{4}^{\ast}]^{T}$ satisfies Assumption~\ref{assum:Nash} with $H$ in (\ref{eq:H_20240311}) and $h$ in (\ref{eq:h_20240311}). 

In addition, according to Assumption~\ref{assum:SDD}, the matrix $H$ in (\ref{eq:H_20240311}) is strictly diagonally dominant, thus, the Nash equilibrium $\theta^{\ast}$ exists and this is unique since strictly diagonally dominant matrices are nonsingular by Levy-Desplanques Theorem \cite{T:1949}. Moreover, $H^{i}_{ij}=H^{j}_{ji}$, then $H$ in (\ref{eq:H_20240311}) is a negative definite symmetric matrix and the nooncoperative game belongs to a class of games known as \textit{potential games} \cite{MS:1996}.

We have simulated the Lie-bracket NES strategy for the static noncooperative game above with four firms in an oligopoly market structure following the design procedure presented in this paper. The parameters of the plant are the same as those in \cite{FKB:2012}: with initial conditions $\hat{\theta}_{1}(0)=52$, $\hat{\theta}_{2}(0)=40.93$, $\hat{\theta}_{3}(0)=33.5$, $\hat{\theta}_{4}(0)=35.09$; $S_{d} = 100$, $R_{1}=0.15$, $R_{2}=0.30$, $R_{3}=0.60$, $R_{4}=1$, $m_{1}=30$, $m_{2}=30$, $m_{3}=25$, $m_{4}=20$, which, according to (\ref{eq:NE}), yield the unique Nash equilibrium 
\begin{align} 
\theta^{*}&=\begin{bmatrix}42.8818 & 40.9300 & 37.8363 & 35.0874\end{bmatrix}^{T}\,, \label{eq:theta_NE} \\
J^{*}&=\begin{bmatrix}524.0208 & 293.4217 & 238.4846 & 209.6584\end{bmatrix}^{T}\,. \label{eq:J_NE}
\end{align}
Moreover, the control parameters are chosen as: $\alpha_{1}=\alpha_{2}=\alpha_{3}=\alpha_{4}=0.05$, $k_{1}=6$, $k_{2}=18$, $k_{3}=10$, $k_{4}=24$, $\omega_{1}=30$, $\omega_{2}=24$, $\omega_{3}=44$, $\omega_{4}=36$.

\begin{figure}[h!]
	\centering
        \centering
        \includegraphics[width=0.7\linewidth]{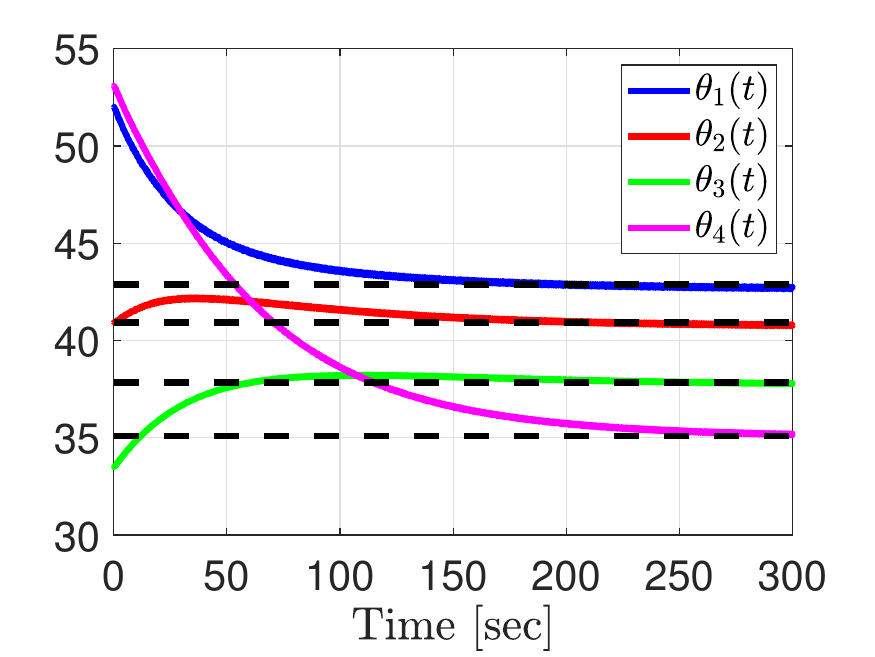} 
        \caption{Players' actions, $\theta_i(t)$ .}
        \label{fig:theta}
\end{figure}
\begin{figure}[h!]
        \centering
        \includegraphics[width=0.7\linewidth]{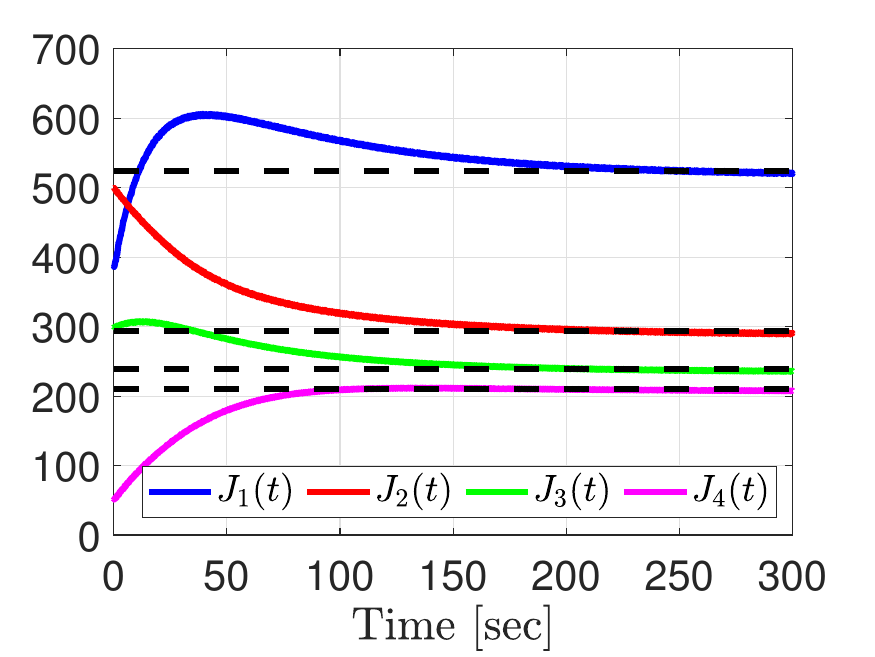} 
        \caption{Payoff functions, $J_i(t)$.}
        \label{fig:J}
\end{figure}

In order to achieve the Nash equilibrium (\ref{eq:theta_NE}) and (\ref{eq:J_NE}), without requiring detailed modeling information, Players $P_1$, $P_2$, $P_3$, and $P_4$ implement the proposed Lie-Bracket NES strategy to determine their optimal actions. Figures~\ref{fig:theta} and \ref{fig:J} depict the time evolution of the proposed Lie-Bracket NES approach, evaluating the coordinated efforts to drive the system toward the Nash equilibrium, as shown in Figure~\ref{fig:theta}. Additionally, Figure~\ref{fig:J} illustrates how, within the described oligopoly market structure, the four firms—without sharing any information—successfully maximize their profits $J_i(t)$ by employing the proposed strategy to determine the price $\theta_i(t)$ of their product in the static noncooperative game.

\section{Conclusion} \label{sec7}

This paper has introduced a novel Nash equilibrium seeking method with bounded update rates. Locally stable convergence to Nash equilibrium in quadratic noncooperative games was rigorously guaranteed. The stability analysis was conducted using Lie-Bracket techniques and Lyapunov's direct method. Moreover, our result quantified the size of the ultimate small residual sets around the Nash equilibrium, being independent of the amplitudes of the perturbation-probing signals, unlike in classical extremum seeking. A numerical example validated the effectiveness of the proposed approach by considering an oligopoly market structure, with all firms implementing the proposed distributed Lie-bracket Nash seeking policy to achieve the Nash equilibrium without sharing information among them. Ultimately, the proposed strategy enables all firms to improve their profits such that the action updates occur independently of each other. 

Future work could examine cases where players use different algorithms, some exhibit stubborn behavior as in \cite{FKB:2012}, and deception, as characterized in \cite{Tang:2024}, or the framework is extended to non-quadratic cost functions and bounded rationality. The proposed update law is interesting, not only because it is bounded but also because it extends to unicycles---which could be explored for a game with mobile robots
\cite{Zha:2007,DSEJ:2013}, but using instead angular velocity tuning and constant (bounded) forward velocity \cite{Scheinker:2017}.

\section*{Acknowledgment}

This study was financed in part by the Coordena{\c c}{\~a}o de Aperfei{\c c}oamento de Pessoal de N{\'i}vel Superior – Brasil (CAPES) – Finance Code 001. The authors also acknowledge the Brazilian Funding Agencies Conselho Nacional de Desenvolvimento Cient{\'i}fico e Tecnol{\'o}gico (CNPq) and Funda{\c c}{\~a}o de Amparo {\`a} Pesquisa do Estado do Rio de Janeiro (FAPERJ).

\section*{Appendix}

\subsection*{Averaging of Nonholonomic Systems \cite{GL:1993}}

In this section, we recall the averaging technique introduced in \cite{GL:1993} to compute asymptotic behaviour of a nonholonomic system under application of some highly oscillatory inputs.

Consider the following nonholonomic system: 
\begin{equation}
    \dot{x} = \sum_{i=1}^m b_i(x) u_i^\epsilon\,, \label{eq:appendix_1}
\end{equation}
with control input of the form
\begin{equation}
    u_i^\epsilon = \bar{u}_i(t) + \frac{1}{\sqrt{\epsilon}} \tilde{u}_i(t, \theta)\,, \label{eq:appendix_2}
\end{equation}
where $\theta = t / \epsilon$, and the function $\tilde{u}_i$ is periodic in $\theta$ with period $T$, and has zero average
\begin{equation}
    \int_0^{T} \tilde{u}_i(t, \theta) \, d\theta = 0. \label{eq:appendix_3}
\end{equation}

We can identify $\theta$ as the \textit{fast} variable, and $(x, t)$ the \textit{slow} variables. We can, for small enough $\epsilon$, replace the  system (\ref{eq:appendix_1})--(\ref{eq:appendix_3}) with an ``average'' system for which the fast variable is eliminated. Moreover, the resulting average system serves as a good approximation of system (\ref{eq:appendix_1}).

\begin{theorem}
For sufficiently small $\epsilon$, the trajectory of system (\ref{eq:appendix_1}) is bounded by the solution of the following system:
\begin{equation}
    \dot{z} = \sum_{i=1}^m b_i(z) \bar{u}_i + \frac{1}{T} \sum_{i<j} [b_i, b_j] \nu_{i,j}, \quad z(0) = x(0) = x_0 \label{eq:appendix_4}
\end{equation}
in the sense that
\begin{equation}
    \|x - z\| \leq \mathcal{O}(\epsilon), \label{eq:appendix_5}
\end{equation}
where
\begin{equation}
    \nu_{i,j} = \int_0^{T} \int_0^{\theta} \tilde{u}_i(t, \tau) \tilde{u}_j(t, \theta) d\tau d\theta \label{eq:appendix_6}
\end{equation}
and $\mathcal{O}(\epsilon)$ is a parameter which tends to zero as $\epsilon \to 0$.
\end{theorem}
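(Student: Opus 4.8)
The plan is to prove the closeness estimate (\ref{eq:appendix_5}) by a two-stage near-identity (averaging) change of variables that absorbs the singular oscillatory input, exposes the Lie-bracket drift, and then reduces the problem to a Gronwall comparison against the averaged system (\ref{eq:appendix_4}). The feature to exploit is the matched scaling in (\ref{eq:appendix_2}): the dither has amplitude $1/\sqrt{\epsilon}$ but frequency $1/\epsilon$, so its time-primitive is $\mathcal{O}(\sqrt{\epsilon})$ and the induced oscillation of $x$ about its mean stays small.

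First I would set $\theta=t/\epsilon$ and introduce the $\theta$-primitive of each dither, $\Phi_i(t,\theta):=\int_0^{\theta}\tilde u_i(t,\sigma)\,d\sigma$, which is $T$-periodic in $\theta$ exactly because of the zero-mean condition (\ref{eq:appendix_3}). I then define the first transformation
\[ x = z + \sqrt{\epsilon}\sum_{i=1}^m b_i(z)\,\Phi_i(t,\theta). \]
Differentiating along (\ref{eq:appendix_1}) with $\dot\theta=1/\epsilon$, and Taylor-expanding each $b_i(z+\sqrt{\epsilon}\,(\cdot))$ about $z$, the decisive cancellation appears: since $\partial_\theta\Phi_i=\tilde u_i$, the two $\mathcal{O}(\epsilon^{-1/2})$ contributions cancel identically, so the singular part of the input is removed. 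This is the step that makes the $1/\sqrt{\epsilon}$ amplitude admissible and is the whole reason the Lie bracket, rather than a divergence, governs the limit.

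Collecting the $\mathcal{O}(1)$ terms leaves $\dot z=\sum_i b_i(z)\bar u_i+\sum_{i,j}\frac{\partial b_i}{\partial z}(z)\,b_j(z)\,\Phi_j\tilde u_i+\mathcal{O}(\sqrt{\epsilon})$. Averaging the cross term over one $\theta$-period and using the antisymmetry identity $\nu_{ij}=-\nu_{ji}$ — which follows from Fubini together with $\int_0^T\tilde u_i\,d\theta=0$, and which also forces the diagonal terms $\nu_{ii}=0$ — collapses $\sum_{i,j}\partial_z b_i\,b_j\,\tfrac1T\!\int_0^T\!\Phi_j\tilde u_i\,d\theta$ into precisely $\tfrac1T\sum_{i<j}[b_i,b_j]\,\nu_{ij}$, i.e. the drift of (\ref{eq:appendix_4}) with $\nu_{ij}$ as in (\ref{eq:appendix_6}). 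A second near-identity transformation $z=\zeta+\epsilon\,q(\zeta,t,\theta)$, with $q$ the $\theta$-antiderivative of the leftover zero-mean $\mathcal{O}(1)$ oscillation, cancels that oscillation and yields $\dot\zeta=\sum_i b_i(\zeta)\bar u_i+\tfrac1T\sum_{i<j}[b_i,b_j]\nu_{ij}+\mathcal{O}(\epsilon)$, the averaged vector field perturbed only at order $\epsilon$.

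I would then close with Gronwall's inequality: on a fixed compact time interval the $\mathcal{O}(\epsilon)$ discrepancy between the $\zeta$-equation and the exact averaged system (\ref{eq:appendix_4}) accumulates to an $\mathcal{O}(\epsilon)$ gap, and composing the two changes of variables returns the estimate (\ref{eq:appendix_5}). The main obstacle is the careful accounting of orders: the oscillatory correction $\sqrt{\epsilon}\sum_i b_i\Phi_i$ is a priori only $\mathcal{O}(\sqrt{\epsilon})$, so obtaining the stated $\mathcal{O}(\epsilon)$ bound hinges on verifying that this correction is purely oscillatory and is fully reabsorbed by the transformations rather than contributing to the slow drift, and on securing a uniform-in-$\epsilon$ a priori bound that confines $x$, $z$, and $\zeta$ to a fixed compact set where the $b_i$ and their Jacobians are Lipschitz — this is exactly what renders the Gronwall constant, and hence the implied constant in $\mathcal{O}(\epsilon)$, independent of $\epsilon$.
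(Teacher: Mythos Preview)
The paper does not prove this theorem; it is quoted from \cite{GL:1993} in the Appendix and used as a black box in the proof of the main result (Section~\ref{sec5}), so there is no in-paper argument to compare your proposal against. Your two-stage near-identity change of variables followed by a Gronwall comparison is the standard route for this class of Lie-bracket averaging results and is in line with what one finds in \cite{GL:1993} and \cite{DSEJ:2013}; the cancellation of the $\mathcal{O}(\epsilon^{-1/2})$ terms, the emergence of the cross term $\sum_{i,j}\partial_z b_i\,b_j\,\Phi_j\tilde u_i$, and its collapse via the antisymmetry $\nu_{ij}=-\nu_{ji}$ into $\tfrac{1}{T}\sum_{i<j}[b_i,b_j]\nu_{ij}$ are all correctly identified.

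The concern you raise at the end is real, not just bookkeeping. The first correction $\sqrt{\epsilon}\sum_i b_i(z)\Phi_i(t,\theta)$ is a genuine $\mathcal{O}(\sqrt{\epsilon})$ oscillation of $x$ about its slow mean, and no further near-identity transformation can remove it from the difference $x-z$ when $z$ denotes the solution of the averaged system (\ref{eq:appendix_4}): it is part of the actual trajectory, not an artifact. Your argument therefore delivers $\|x-z\|=\mathcal{O}(\sqrt{\epsilon})$ on compact time intervals, which is the sharp rate for the $1/\sqrt{\epsilon}$-amplitude, $1/\epsilon$-frequency scaling in (\ref{eq:appendix_2}); this is also what \cite{DSEJ:2013} obtain, with residual proportional to $1/\sqrt{\omega}$. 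The $\mathcal{O}(\epsilon)$ in (\ref{eq:appendix_5}) is carried over from the cited source, but your method cannot reach it and you should not expect it to --- the honest output of your proof is $\mathcal{O}(\sqrt{\epsilon})$.
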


In other words, system (\ref{eq:appendix_4}) defines the asymptotic behaviour of system (\ref{eq:appendix_1})--(\ref{eq:appendix_3}), as $\epsilon \to 0$.

\end{document}